\newcommand{\myfig}[3][0]{
\begin{center}
  \vspace{0.2cm}
  \includegraphics[width=#3\hsize,angle=#1]{#2}
  \nobreak\medskip
\end{center}
}
\newcommand{\mycaption}[1]{
  \vspace{0.2cm}
  \begin{quote}
    {{\sc Figure} \arabic{figure}: #1}
  \end{quote}
  \vspace{0.2cm}
  \stepcounter{figure}
}
\theoremstyle{plain}
\newtheorem{theorem}{Theorem}[section]
\newtheorem{corollary}[theorem]{Corollary}
\newtheorem{lemma}[theorem]{Lemma}
\begin{document}

\title[Linear Stability of SBC Orbits]{Linear Stability for Some Symmetric Periodic Simultaneous Binary Collision Orbits in the Four-Body Problem}
\author[Bakker]{Lennard F. Bakker}
\author[Ouyang]{Tiancheng Ouyang} 
\author[Simmons]{Skyler Simmons}
\author[Yan]{Duokui Yan}
\address{Department of Mathematics \\  Brigham Young University\\ Provo, UT 84602}
\email[Lennard F. Bakker]{bakker@math.byu.edu}
\email[Tiancheng Ouyang]{ouyang@math.byu.edu}
\email[Duokui Yan]{duokui@math.byu.edu}
\email[Skyler Simmons]{xinkaisen@yahoo.com}

\author[Roberts]{Gareth E. Roberts}
\address{Department of Mathematics and Computer Science \\ College of the Holy Cross \\ 1 College Street \\ Worcester, MA 01610}
\email[Gareth E. Roberts]{groberts@radius.holycross.edu}

\date{}

\thanks{The research of Gareth E. Roberts supported in part by NSF grant DMS-0708741}

\keywords{N-Body Probem, Linear Stability, Periodic Simultaneous Binary Collision Orbit}
\subjclass[2000]{Primary: 70F10, 70H12, 70H14; Secondary: 70F16, 70H33.}

\begin{abstract} We apply the analytic-numerical method of Roberts to determine the linear stability of time-reversible periodic simultaneous binary collision orbits in the symmetric collinear four body problem with masses 1, m, m , 1, and also in a symmetric planar four-body problem with equal masses. For the collinear problem, this verifies the earlier numerical results of Sweatman for linear stability.
\end{abstract}

\maketitle

\section{Introduction} Recently, Roberts \cite{RO} desribed an analytic-numerical method for determining the linear stability of a symmetric periodic orbit of a Hamiltonian system. He applied this method to the time-reversible collision-free figure-eight orbit in the equal mass three body problem numerically discovered by Moore \cite{MR} and whose existence was proven by Chenciner and Montgomery \cite{CM}. (Other such choreographic solutions were found numerically by Sim\'o \cite{Si}). Roberts' method shows that the figure eight orbit is linearly stable. The method uses the symmetries to factor a matrix similar to the monodromy matrix for the periodic orbit into an integer power of the product of two involutions. One of the two involutions depends on the linearized dynamics along only a part of the periodic orbit. For the figure eight this part is one-tweltfth of the full orbit since it has a symmetry group isomorphic to the group $D_3\times {\mathbb Z}_2$ of order $12$. (Here the dihedral group $D_k$ is the group of symmetries of the regular $k$-gon.) The eigenvalues of the product of the two involutions are then reduced to the numerical computation of a few real numbers.

Schubart \cite{Sc} numerically discovered a singular periodic orbit in the collinear equal mass three-body problem. The orbit alternates between binary collisions. H\'enon \cite{He} extended Schubart's numerical investigations to the case of unequal masses. Only recently did Venturelli \cite{Ve} and Moeckel \cite{Mo} prove the existence of the Schubart orbit when the outer masses are equal and the inner mass is arbitrary. The linear stability of the Schubart orbit was determined numerically by Hietarinta and Mikkola \cite{HM} revealing that linear stability occurs for some but not all of the choices of the three masses. Sweatman (\cite{SW} and \cite{SW2}) numerically found and determined the linear stability of a Schubart-like orbit in the symmetric collinear four body problem with masses $1$, $m$, $m$, and $1$. This Schubart-like periodic orbit alternates between simultaneous binary collisions (SBC) and inner binary collisions. Ouyang and Yan \cite{OY2} proved the existence of this orbit. In the regularized setting, this periodic orbit has a symmetry group isomorphic to $D_2$, of which both of the generators are time-reversing symmetries. Ouyang, Yan, and Simmons \cite{OY3} numerically found and then proved the existence of a singular periodic orbit in a symmetric planar four-body problem with equal masses in which the four bodies alternate between different simultaneous binary collisions. In the regularized setting, this periodic orbit has a symmetry group isomorphic to $D_4$, of which one of the generators is a time-reversing symmetry. The regularization of these singular periodic orbits is achieved by a generalized Levi-Civita type transformation and an appropriate scaling of time, as adapted from Aarseth and Zare \cite{AZ}.

In this paper we apply the method of Roberts to prove the linear stability of the Schubart-like orbit in the symmetric collinear four body $1$, $m$, $m$, $1$ problem for certain values of $m$, and of the singular periodic orbit in the symmetric planar equal mass problem. In both settings, the linear stability is determined for the regularized equations only and is reduced to the rigorous numerical computation of a single real number. Our linear stability analysis determines values of $m$ in the interval $[0,50]$ in the collinear problem for which the singular periodic orbit is linear stable, and also shows that the $2D$ singular periodic orbit is linear stable. These examples support and extend the conjecture made by Roberts \cite{RO} that the only linearly stable periodic orbits in the equal mass $n$-body problem are those that exhibit a time-reversing symmetry.

Our linear stability analysis confirms Sweatman's linear stability analysis \cite{SW2} for the singular periodic orbit in the collinear four-body problem. Sweatman used a numerical perturbation technique to assess the stability of the singular periodic orbit when the masses are arranged from left to right as $m_1$, $m_2$, $m_2$, and $m_1$ with the condition that $m_1+m_2=2$. Our mass parameter $m$ is related to his mass parameter $m_1$ by $m=(2-m_1)/m_1$. In terms of our mass parameter $m$, Sweatman's numerical results indicate that linear stability occurs when the value of $m$ is smaller than approximately $2.83$ and when it is larger than approximately $35.4$, and is linearly unstable otherwise.

\section{Linear Stability of Periodic Orbits}\label{-1} For a smooth function $\Gamma$ defined on an open subset of ${\mathbb  R}^{2n}$, suppose that $\gamma(s)$ is a $T$-periodic solution of a Hamiltonian system $z^\prime = J D\Gamma(z)$ where ${}^\prime = d/ds$,
\[ J = \begin{bmatrix} 0 & I \\ - I & 0\end{bmatrix},\]
and $I$ is the appropriately sized identity matrix. The fundamental matrix solution $X(s)$ of the linearized  equations along $\gamma(s)$,
\begin{equation}\label{1}
\xi^\prime= JD^2 \Gamma(\gamma(s)) \xi, \ \ \ \ \ \ \xi(0)=I
\end{equation}
is symplectic and satisfies $X(s+T)= X(s) X(T)$ for all $s$. The matrix $X(T)$ is commonly called the monodromy matrix for $\gamma$, and it measures the non-periodicity of solutions to the linearized equations. The eigenvalues of $X(T)$ are the characteristic multipliers of $\gamma$, and determine the linear stability of the periodic solution $\gamma$. Linear stability therefore requires that all of the multipliers lie on the unit circle.

The characteristic multipliers may be obtained by solving (\ref{1}) with different initial conditions. For an invertible matrix $Y_0$, let $Y(s)$ be the fundamental matrix solution to
\begin{equation}\label{2}
\xi^\prime = JD^2 \Gamma(\gamma(s)) \xi, \ \ \ \ \ \ \xi(0)=Y_0.
\end{equation}
By definition of $X(s)$, we know that $Y(s)=X(s) Y_0$, and so $X(T)=Y(T) Y^{-1}_0$. It follows that the matrix $Y_0^{-1} Y(T)$ is similar to the monodromy matrix i.e.,
\[ X(T)=Y(T) Y_0^{-1}= Y_0 (Y_0^{-1} Y(T)) Y_0^{-1}.\]
Thus the eigenvalues of $Y_0^{-1} Y(T) $ are identical to the characteristic multipliers.

\subsection{Stability reduction using symmetry}\label{0}
The monodromy matrix for a periodic solution with special types of symmetry can be factored using some linear algebra and standard techniques in differential equations. We begin by reviewing the relevant factorization and reduction theory that are applicable to a wide range of symmetric periodic orbits commonly found in Hamiltonian systems. Proofs of the following statements can be found in \cite{RO}.

\begin{lemma}\label{a}
Suppose that $\gamma(s)$ is a symmetric $T-$periodic solution of a Hamiltonian system with Hamiltonian $\Gamma$ and symmetry matrix $S$ such that:
 \begin{enumerate}
 \item for some positive integer $N$, $\gamma(s+T/N)= S  \gamma(s) $ for all $s;$
 \item $\Gamma(Sz)=\Gamma(z);$
 \item $SJ=JS;$
 \item $S$ is orthogonal.
 \end{enumerate}
Then the fundamental matrix solution $X(s)$ to the linearization problem in {\rm (\ref{1})} satisfies
 $$ X\left(s+T/N\right)= S X(s) S^{T} X(T/N).$$
 \end{lemma}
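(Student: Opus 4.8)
The plan is to exploit uniqueness of solutions to linear ODEs by showing that both sides of the claimed identity are fundamental matrix solutions of the \emph{same} linear system with the \emph{same} initial condition at $s=0$. Write $A(s) = JD^2\Gamma(\gamma(s))$, so that $X(s)$ solves $\xi' = A(s)\xi$ with $X(0) = I$. The key preliminary observation is that, because of hypothesis (1), the coefficient matrix $A(s)$ is itself ``$S$-equivariant with a shift'': differentiating $\gamma(s+T/N) = S\gamma(s)$ and using $\Gamma(Sz) = \Gamma(z)$ (hypothesis (2)) together with $SJ = JS$ (hypothesis (3)) and orthogonality $S^{-1} = S^T$ (hypothesis (4)), one obtains the relation
\begin{equation}\label{equivariance}
A(s + T/N) = S A(s) S^T .
\end{equation}
Concretely, differentiating $\Gamma(Sz) = \Gamma(z)$ twice gives $S^T D^2\Gamma(Sz) S = D^2\Gamma(z)$, hence $D^2\Gamma(\gamma(s+T/N)) = D^2\Gamma(S\gamma(s)) = S D^2\Gamma(\gamma(s)) S^T$; multiplying on the left by $J$ and slipping $J$ through $S$ via (3) yields \eqref{equivariance}. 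This is the real content of the lemma and the step I expect to require the most care, since one must keep track of where transposes versus inverses appear and use all four hypotheses.

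Next I would define two matrix-valued functions and check they solve the same IVP. Let $U(s) = X(s + T/N)$ and let $V(s) = S X(s) S^T X(T/N)$. Differentiating $U$ gives $U'(s) = X'(s+T/N) = A(s+T/N) X(s+T/N) = A(s+T/N) U(s)$, which by \eqref{equivariance} equals $S A(s) S^T U(s)$. Differentiating $V$ gives $V'(s) = S X'(s) S^T X(T/N) = S A(s) X(s) S^T X(T/N) = S A(s) S^T \big(S X(s) S^T X(T/N)\big) = S A(s) S^T V(s)$, where in the middle I inserted $S^T S = I$. So $U$ and $V$ both satisfy the linear matrix ODE $W'(s) = \widetilde A(s) W(s)$ with the common time-dependent coefficient $\widetilde A(s) = S A(s) S^T$.

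Finally I would compare initial conditions at $s = 0$: $U(0) = X(T/N)$, and $V(0) = S X(0) S^T X(T/N) = S S^T X(T/N) = X(T/N)$ since $X(0) = I$ and $S$ is orthogonal. By the uniqueness theorem for linear ODEs with continuous coefficients, $U(s) = V(s)$ for all $s$, which is exactly the asserted identity $X(s + T/N) = S X(s) S^T X(T/N)$. The only loose end is confirming that $\widetilde A(s) = SA(s)S^T$ is continuous (immediate, since $\gamma$ is smooth and $S$ is constant) so that the existence–uniqueness theorem applies on the whole interval; everything else is the bookkeeping in deriving \eqref{equivariance}.
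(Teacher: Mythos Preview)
Your argument is correct and is exactly the standard one: derive the equivariance $A(s+T/N)=SA(s)S^T$ from hypotheses (2)--(4), then observe that $X(s+T/N)$ and $SX(s)S^TX(T/N)$ solve the same linear matrix ODE with the same initial value at $s=0$, and conclude by uniqueness. The paper itself does not prove this lemma but cites Roberts \cite{RO}; your proof is the expected one and almost certainly coincides with the argument given there.
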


Here of course, the notation $S^T$ means the transpose of $S$. We mention this because we are using the letter $T$ in two distinct ways.

\begin{corollary}\label{aa}
Given the hypothesis of Lemma \ref{a}, the fundamental matrix solution $X(s)$ satisfies
\begin{equation*}
X(kT/N)= S^{k} \big(S^{T} X(T/N)\big)^{k}
\end{equation*}
for any  $k \in \mathbb{N}$.
\end{corollary}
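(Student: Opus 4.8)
The plan is to argue by induction on $k$, using Lemma \ref{a} as the single engine. The base case $k=1$ is immediate: since $S$ is orthogonal we have $SS^T = I$, so $S^1\big(S^T X(T/N)\big)^1 = SS^T X(T/N) = X(T/N)$, which is exactly $X(kT/N)$ for $k=1$.

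For the inductive step, I would assume the formula $X(kT/N) = S^k\big(S^T X(T/N)\big)^k$ and then evaluate the identity of Lemma \ref{a} at the shifted time $s = kT/N$. Since that identity holds for all $s$, it gives
\[
X\big((k+1)T/N\big) = X\big(kT/N + T/N\big) = S\,X(kT/N)\,S^T X(T/N).
\]
Substituting the inductive hypothesis for $X(kT/N)$ yields
\[
X\big((k+1)T/N\big) = S\cdot S^k\big(S^T X(T/N)\big)^k \cdot S^T X(T/N) = S^{k+1}\big(S^T X(T/N)\big)^{k+1},
\]
where the last equality just regroups the product $S\cdot S^k = S^{k+1}$ and absorbs the trailing factor $S^T X(T/N)$ into the $k$-th power to make the $(k+1)$-st power. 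This closes the induction and proves the claim for all $k \in \mathbb{N}$.

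There is essentially no hard step here; the statement is a formal bookkeeping consequence of Lemma \ref{a}. The only point that deserves a word of care is that one must invoke Lemma \ref{a} at a nonzero base point $s = kT/N$ rather than only at $s=0$, which is legitimate precisely because the conclusion of that lemma is asserted for all $s$. Orthogonality of $S$ is used only to dispatch the base case (equivalently, one could start the induction at $k=0$ with the convention that empty products are the identity, in which case orthogonality is not even needed for the induction itself).
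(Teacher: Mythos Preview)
Your induction argument is correct and is exactly the natural proof: apply Lemma~\ref{a} at $s=kT/N$ and regroup. The paper itself does not write out a proof of this corollary---it defers all proofs in Section~\ref{0} to \cite{RO}---so there is no in-paper argument to compare against, but your proof is the standard one and matches what is done in \cite{RO}.
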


A remark here is that if $Y(s)$ is the fundamental matrix solution to Equation (\ref{2}), then for any $k\in{\mathbb N}$, the matrix $Y(kT/N)$ factors as
\[ Y(kT/N)= S^{k} Y_0 (Y^{-1}_0 S^{T} Y(T/N))^{k}.\]

\begin{lemma}\label{b}
Suppose that $\gamma(s)$ is a $T-$periodic solution of a Hamiltonian system with Hamiltonian $\Gamma$ and time-reversing symmetry $S$ such that:
\begin{enumerate}
 \item for some positive integer $N$, $\gamma(-s+T/N)= S  \gamma(s) $ for all $s;$
 \item $\Gamma(Sz)=\Gamma(z);$
 \item $SJ=-JS;$
 \item $S$ is orthogonal.
 \end{enumerate}
Then the fundamental matrix solution $X(s)$ to the linearization problem in {\rm (\ref{1})} satisfies
\begin{equation*}
X(-s+ T/N)= S X(s) S^{T} X(T/N).
\end{equation*}
\end{lemma}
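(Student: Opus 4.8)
The plan is to mimic the proof of Lemma~\ref{a}, the only genuinely new point being the sign bookkeeping forced by the time reversal $s\mapsto -s+T/N$ together with hypothesis (3), $SJ=-JS$. Write $B(s)=JD^2\Gamma(\gamma(s))$, so that the linearization (\ref{1}) reads $X'=B(s)X$ with $X(0)=I$, and introduce $V(s)=X(-s+T/N)$. Then $V(0)=X(T/N)$, and by the chain rule $V'(s)=-B(-s+T/N)\,V(s)$. The goal is to identify $S^{T}V(s)$ as a solution of the \emph{original} linearized equation and then invoke uniqueness.

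The key step is to rewrite the coefficient $B(-s+T/N)$ in terms of $B(s)$. First I would differentiate the invariance $\Gamma(Sz)=\Gamma(z)$ from hypothesis (2): one differentiation gives $S^{T}D\Gamma(Sz)=D\Gamma(z)$, hence $D\Gamma(Sz)=S\,D\Gamma(z)$ since $S$ is orthogonal (hypothesis (4)); a second differentiation then gives $D^{2}\Gamma(Sz)=S\,D^{2}\Gamma(z)\,S^{T}$. Evaluating at $z=\gamma(s)$ and using hypothesis (1) in the form $\gamma(-s+T/N)=S\gamma(s)$,
\[
B(-s+T/N)=J\,D^{2}\Gamma\bigl(S\gamma(s)\bigr)=J\,S\,D^{2}\Gamma(\gamma(s))\,S^{T}=-S\,J\,D^{2}\Gamma(\gamma(s))\,S^{T}=-S\,B(s)\,S^{T},
\]
where the third equality is precisely hypothesis (3). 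Substituting back, $V'(s)=-B(-s+T/N)\,V(s)=S\,B(s)\,S^{T}\,V(s)$.

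Now I would conjugate to absorb the factors of $S$. Set $U(s)=S^{T}V(s)=S^{T}X(-s+T/N)$. Then $U(0)=S^{T}X(T/N)$ and, using $S^{T}S=I$,
\[
U'(s)=S^{T}V'(s)=S^{T}\,S\,B(s)\,S^{T}\,V(s)=B(s)\,S^{T}V(s)=B(s)\,U(s),
\]
so $U$ solves the same linear matrix ODE as $X$. By uniqueness of solutions (each column is an initial value problem for the linear system, and $X(s)$ is the fundamental solution with $X(0)=I$), we get $U(s)=X(s)\,U(0)=X(s)\,S^{T}X(T/N)$. Multiplying on the left by $S$ and using $SS^{T}=I$ yields $X(-s+T/N)=S\,X(s)\,S^{T}X(T/N)$, which is the assertion.

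Everything here is routine once the coefficient identity is in hand; the one place demanding care is the double differentiation of $\Gamma(Sz)=\Gamma(z)$ with correct placement of transposes (where orthogonality of $S$ is used), and the observation that the minus sign produced by the time reversal in $V'(s)=-B(-s+T/N)V(s)$ is exactly cancelled by the minus sign in $SJ=-JS$. That cancellation is the structural reason Lemma~\ref{b} has the same conclusion as Lemma~\ref{a} despite $S$ now being a time-reversing symmetry; if instead one had $SJ=JS$ the conjugated function would satisfy the time-reversed linear equation and the conclusion would change accordingly.
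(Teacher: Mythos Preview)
Your proof is correct; the sign bookkeeping, the Hessian conjugation identity $D^{2}\Gamma(Sz)=S\,D^{2}\Gamma(z)\,S^{T}$, and the uniqueness argument are all carried out cleanly. The paper itself does not prove this lemma but simply cites \cite{RO} for the proof, and what you have written is exactly the standard argument one finds there, so there is nothing further to compare.
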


\begin{corollary}\label{c}
Given the hypothesis of Lemma \ref{b},
\begin{equation*}
X(T/N)= S B^{-1} S^{T} B \quad \text{where} \ B= X(T/2N).
\end{equation*}
\end{corollary}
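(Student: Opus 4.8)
The plan is to derive Corollary \ref{c} directly from Lemma \ref{b} by specializing $s$ to the midpoint of the time interval $[0, T/N]$, namely $s = T/(2N)$. With this choice, the reflection $-s + T/N$ appearing in Lemma \ref{b} becomes $-T/(2N) + T/N = T/(2N)$, so the left-hand side $X(-s + T/N)$ and the factor $X(s)$ in the middle of the right-hand side both collapse to $B := X(T/(2N))$. Substituting into the identity $X(-s + T/N) = S X(s) S^T X(T/N)$ from Lemma \ref{b} then gives $B = S B S^T X(T/N)$.

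From here it is pure linear algebra. First I would note that $B$ is invertible: it is the value at $s = T/(2N)$ of the fundamental matrix solution $X(s)$, which is symplectic (as remarked in the text immediately after Equation (\ref{1})) and hence invertible for every $s$. Likewise $S$ is invertible, being orthogonal by hypothesis (4), with $S^{-1} = S^T$. So I can solve the relation $B = S B S^T X(T/N)$ for $X(T/N)$: left-multiply by $(S B S^T)^{-1} = (S^T)^{-1} B^{-1} S^{-1} = S B^{-1} S^T$ (using $S^{-1} = S^T$ again), obtaining
\begin{equation*}
X(T/N) = S B^{-1} S^T \cdot B = S B^{-1} S^T B,
\end{equation*}
which is exactly the claimed formula with $B = X(T/(2N))$.

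There is essentially no obstacle here — the content of the corollary is entirely front-loaded into Lemma \ref{b}, and the corollary is just the observation that evaluating that lemma at the fixed point of the time-reversal $s \mapsto -s + T/N$ produces a conjugation-type factorization of the ``quarter-period return map'' $X(T/N)$. The only points that need care are bookkeeping ones: making sure the substitution $s = T/(2N)$ is legitimate (it is, since Lemma \ref{b} holds for all $s$), and invoking invertibility of $B$ and orthogonality of $S$ at the right moments so that the algebraic rearrangement is valid. One could also remark, though it is not needed for the statement, that this exhibits $X(T/N)$ as similar to $S^T X(T/N) S = B^{-1} S^T B S = B^{-1}(S^T B S)$, making transparent why iterating via Corollary \ref{aa} will express a power of (a matrix similar to) the monodromy matrix in terms of the single matrix $B$ together with the symmetry $S$.
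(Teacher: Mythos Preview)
Your argument is correct: substituting $s = T/(2N)$ into Lemma~\ref{b} collapses both $X(-s+T/N)$ and $X(s)$ to $B$, and then orthogonality of $S$ and invertibility of $B$ let you solve $B = S B S^T X(T/N)$ for $X(T/N)$ exactly as you wrote. The paper itself does not supply a proof of this corollary---it simply defers to \cite{RO}---so there is nothing to compare against; your derivation is the standard one and matches what appears in that reference.
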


Several more remarks about these factorizations are needed here.
\begin{enumerate}
\item In the case of time-reversing symmetry matrix, $S$ is typically block diagonal with two blocks of opposite sign, one for the position variable and one for the momenta, that is,
\[ \left[ \begin{array}{ccc}
F & 0  \\
0 & -F
\end{array}
\right] \]
where $F$ is orthogonal. A matrix of this form is orthogonal and anti-commutes with $J$.
\item A matrix satisfying properties 3 and 4 of Lemma \ref{b} is symplectic with a multiplier of $-1$ since $S^{T} J S= -S^{T} S J=-J$.
\item If $Y(s)$ is the fundamental matrix solution to (\ref{2}), then a similar argument shows that $Y(-s+T/N)= SY(s) Y_0^{-1} S^{T} Y(T/N)$ and consequently
    \begin{equation*}
    Y(T/N)= SY_0 B^{-1} S^{T} B, \quad \text{where} \ B=Y(T/2N).
    \end{equation*}
\end{enumerate}

Applying this factorization theory results in expressing the matrix $Y_0^{-1}Y(T)$, which is similar to $X(T)$, as $W^k$ for some positive integer $k$, where the symplectic matrix $W$ is the product of two involutions. If an eigenvalue of $W$ lies on the unit circle, then so does its $k^{\rm th}$ power. The symplectic matrix $W$ is called {\it stable} if all of its eigenvalues lie on the unit circle.

\begin{lemma}\label{06}
For a symplectic matrix $W$, suppose there is a matrix $K$ such that
\begin{equation}\label{5}
\frac{1}{2} \left( W + W^{-1}\right) =\left[
\begin{array}{ccc}
K^{T} & 0  \\
0 & K
\end{array}
\right]. 
\end{equation}
Then $W$ is stable if and only if all of the eigenvalues of $K$ are real and have absolute value smaller than or equal to $1$.
\end{lemma}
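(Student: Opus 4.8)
The plan is to reduce everything to two facts: the spectral mapping theorem applied to the rational function $r(x) = \tfrac{1}{2}(x + x^{-1})$, and an elementary observation about where this ``Joukowski'' map sends the unit circle. Since $W$ is symplectic it is invertible, so $r$ is regular on $\sigma(W) \subset \mathbb{C}\setminus\{0\}$ and $A := \tfrac{1}{2}(W+W^{-1}) = r(W)$ makes sense. Putting $W$ into triangular (Jordan) form shows that $W^{-1}$ is then triangular with the reciprocals of the diagonal entries of $W$ down its diagonal, hence $A$ is triangular with diagonal entries $\tfrac{1}{2}(\lambda + \lambda^{-1})$; thus $\sigma(A) = \{\tfrac{1}{2}(\lambda+\lambda^{-1}) : \lambda \in \sigma(W)\}$. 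On the other hand, the hypothesis (\ref{5}) says $A$ is block diagonal with blocks $K^{T}$ and $K$, and $K^{T}$ and $K$ have the same characteristic polynomial, so $\sigma(A) = \sigma(K)$. Combining these, $\sigma(K) = \{\tfrac12(\lambda+\lambda^{-1}) : \lambda \in \sigma(W)\}$.

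The next step is the Joukowski observation: for $\lambda \in \mathbb{C}\setminus\{0\}$, set $\mu = \tfrac12(\lambda + \lambda^{-1})$; then $|\lambda| = 1$ if and only if $\mu \in \mathbb{R}$ and $|\mu| \le 1$. For the forward implication, write $\lambda = e^{i\theta}$, so $\mu = \cos\theta \in [-1,1]$. For the converse, note that $\lambda$ and $\lambda^{-1}$ are precisely the two roots of $x^2 - 2\mu x + 1 = 0$; when $\mu$ is real with $|\mu| \le 1$ the discriminant $4(\mu^2-1)$ is nonpositive, so $\lambda = \mu \pm i\sqrt{1-\mu^2}$ and $|\lambda|^2 = \mu^2 + (1-\mu^2) = 1$.

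Assembling the two pieces gives the lemma. If $W$ is stable, every $\lambda \in \sigma(W)$ lies on the unit circle, so by the forward half of the Joukowski observation every element of $\sigma(K) = \{\tfrac12(\lambda+\lambda^{-1})\}$ is real with absolute value at most $1$. Conversely, if every eigenvalue of $K$ is real with absolute value at most $1$, then for an arbitrary $\lambda \in \sigma(W)$ the number $\tfrac12(\lambda+\lambda^{-1})$ belongs to $\sigma(K)$ and hence is real with modulus $\le 1$; the converse half of the Joukowski observation then forces $|\lambda| = 1$. So $W$ is stable.

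The argument is short, and the only place that needs genuine care is the spectral mapping step --- verifying that the eigenvalues of $\tfrac12(W+W^{-1})$ are exactly the numbers $\tfrac12(\lambda+\lambda^{-1})$ as $\lambda$ ranges over the eigenvalues of $W$. This is precisely where symplecticity of $W$ enters (to guarantee invertibility, so that $W^{-1}$ and the map $\lambda \mapsto \lambda^{-1}$ are defined), and it is what allows us to avoid any discussion of Jordan block structure, since the notion of stability here is purely a statement about the location of the eigenvalues of $W$.
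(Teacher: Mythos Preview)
Your argument is correct. The spectral-mapping step is handled cleanly by triangularizing $W$, and the Joukowski observation is exactly the right characterization of the image of the unit circle under $\lambda \mapsto \tfrac12(\lambda+\lambda^{-1})$; both directions of the equivalence then follow immediately.

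There is nothing to compare against in this paper, however: the authors do not prove this lemma themselves but instead defer its proof, along with those of the surrounding lemmas and corollaries, to Roberts~\cite{RO} (see the sentence introducing Section~2.1). Your self-contained argument is therefore a genuine addition rather than a reproduction.

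One small comment on your closing paragraph. You are right that the proof as written uses symplecticity only through invertibility of $W$. But it is worth being aware that in the paper's applications the specific block shape in~(\ref{5}), with $K^{T}$ and $K$ as the diagonal blocks, arises \emph{because} $W$ is symplectic (via the formula for the inverse of a symplectic matrix applied to the factor $D=B^{-1}SB$). So symplecticity is doing real work upstream of the hypothesis, even if the lemma itself only needs $W^{-1}$ to exist.
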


We will show for each of the periodic orbits under consideration, there is a choice of $Y_0$ such that $W$ satisfies Lemma \ref{06}. This reduces the linear stability to the computation of the eigenvalues of $K$. As one of the eigenvalues of $K$ is known to be real and have absolute value $1$, the linear stability is determined by the numerical computation of one real number and showing that, within error, it lies between $-1$ and $1$.

\section{Linear Stability for the Collinear Four-Body Symmetric Periodic Orbit}

The existence of the Schubart-like periodic orbit in the collinear four-body problem has been shown in \cite{OY2}. We review it here. For $x_1\geq x_2\geq 0$, we assume that four masses are located at $x_1$, $x_2$, $-x_2$ and $-x_1$ with masses $1$, $m$, $m$, and $1$ respectively with $m>0$. We also assume that the system remains symmetrically distributed about the center of mass located at the origin.  The respective velocities of the four bodies are $\dot{x}_1$, $\dot{x}_2$,$-\dot{x}_2$,$-\dot{x}_1$ where $\ \dot{} = d/dt$. The Netowanian equations are
\begin{equation*}
\ddot{x}_1= -\frac{1}{4 x_1^2} -\frac{m}{(x_1+x_2)^2} -
\frac{m}{(x_1-x_2)^2}
\end{equation*}

\begin{equation*}
\ddot{x}_2= -\frac{m}{4 x_2^2} -\frac{1}{(x_1+x_2)^2} +
\frac{1}{(x_1-x_2)^2}
\end{equation*}

We recount Sweatman's approach in \cite{SW} and \cite{SW2} to regularize this system. The Hamiltonian for this system is
$$H= \frac{1}{4} w_1^2+ \frac{1}{4m} w_2^2 - \frac{1}{2x_1} - \frac{m^2}{2x_2} - \frac{2m}{x_1+x_2}-\frac{2m}{x_1-x_2},  $$
where  $w_1= 2 \dot{x}_1$ and $w_2= 2m \dot{x}_2$ are the conjugate momenta to $x_1$ and $x_2$. Introduce new canonical coordinates $q_1,q_2,p_1,p_2$ by
$$q_1=x_1-x_2,  \ \ \ q_2=2 x_2, \ \ \  p_1=w_1,  \ \ \  p_2= \frac{1}{2}(w_1+w_2).$$
The Hamiltonian in the new canonical coordinates is
$$H= \frac{1}{4}\left(1+\frac{1}{m}\right)p_1^2   -\frac{p_1p_2}{m} + \frac{p_2^2}{m} -\frac{2m}{q_1}- \frac{m^2}{q_2} - \frac{2m}{q_1+q_2}- \frac{1}{2q_1+q_2}. $$
To regularize the equations of motion, Sweatman introduced a Levi-Civita type of canonical transformation
$$ Q _i^2= q_i ,\ \ P_i= 2 Q_i p_i \ \ (i=1,2),$$
for the the canonical coordinates $Q_1,Q_2,P_1,P_2$, and then replaced time $t$ by the new independent variable $s$ given by
\[ \frac{d t}{d s}= Q_1^2 Q_2^2.\]
In the extended phase space, this produces the regularized Hamiltonian
\begin{align*} \Gamma = \frac{dt}{ds}(H-E) & = \frac{1}{16}\left(1+\frac{1}{m}\right) Q_2^2 P_1^2 + \frac{-Q_1Q_2P_1P_2 + Q_1^2P_2^2}{4m} \\ 
& \ \ \ \ -m^2Q_1^2-2m Q_2^2 -\frac{2m Q_1^2 Q_2^2}{Q_1^2+ Q_2^2}-\frac{ Q_1^2 Q_2^2}{2Q_1^2+ Q_2^2} - E Q_1^2 Q_2^2.
\end{align*}
We fix the energy $E=-1$. The Hamiltonian system in the new coordinate system is
\begin{equation}\label{e1}
Q'_1= \frac{Q_2}{4} \left[\frac{1}{2}\left( 1+ \frac{1}{m}\right)Q_2 P_1 -\frac{1}{m}Q_1 P_2  \right],
\end{equation}

\begin{equation}\label{e2}
Q'_2= \frac{Q_1}{2m} \left[Q_1 P_2- \frac{1}{2}Q_2 P_1 \right],
\end{equation}

\begin{equation}\label{e3}
P'_1= \frac{P_2}{4m}( Q_2 P_1- 2Q_1 P_2)  + 2m^2Q_1 + \frac{4m Q_1 Q_2^4}{( Q_1^2+ Q_2^2)^2} + \frac{2 Q_1 Q_2^4}{(2 Q_1^2+ Q_2^2)^2} - 2Q_1 Q_2 ^2,
\end{equation}

\begin{equation}\label{e4}
 P'_2  = \frac{P_1}{4} \left[ \frac{Q_1 P_2}{m}-\frac{Q_2P_1}{2}\left(1+\frac{1}{m}\right)\right]
   + 4mQ_2 + \frac{4mQ_1^4 Q_2}{ (Q_1^2+ Q_2^2)^2} + \frac{4 Q_1 ^4 Q_2}{(2 Q_1^2+ Q_2^2)^2} - 2Q_1^2 Q_2,
\end{equation}
where $'$ is the derivative with respect to $s$.

From the proof \cite{OY2} of the existence of the Schubart-like periodic orbit $Q_1(s)$, $Q_2(s)$, $P_1(s)$, $P_2(s)$ of periodic $T$, there is a positive constant $R(m)$ such that
\[ Q_1(0)= R(m),\ Q_2(0)=0, \  P_1(0)=0, \ P_2(0)= 2m^{3/2}.\]
These initial conditions correspond to a binary collision of the two inner bodies. By the construction of the periodic orbit, another binary collision of the two inner bodies occurs at $s=T/2$ where the conditions are
\[ Q_1(T/2) = -R(m), \ Q_2(T/2) = 0, \ P_1(T/2)=0, \ P_2(T/2) = -2m^{3/2}.\]
Simultaneous binary collisions correspond to the conditions of the periodic solution when $s=T/4$ and $s=3T/4$, i.e., $Q_1(s)=0$ at these values of $s$. The value of $R(1)$ is approximately $2.29559$. Figure 1 contains a plot of the coordinates $Q_1,Q_2,P_1,P_2$ of the periodic orbit when $m=1$.

\vspace{-0.3in}
\begin{center} 
\myfig{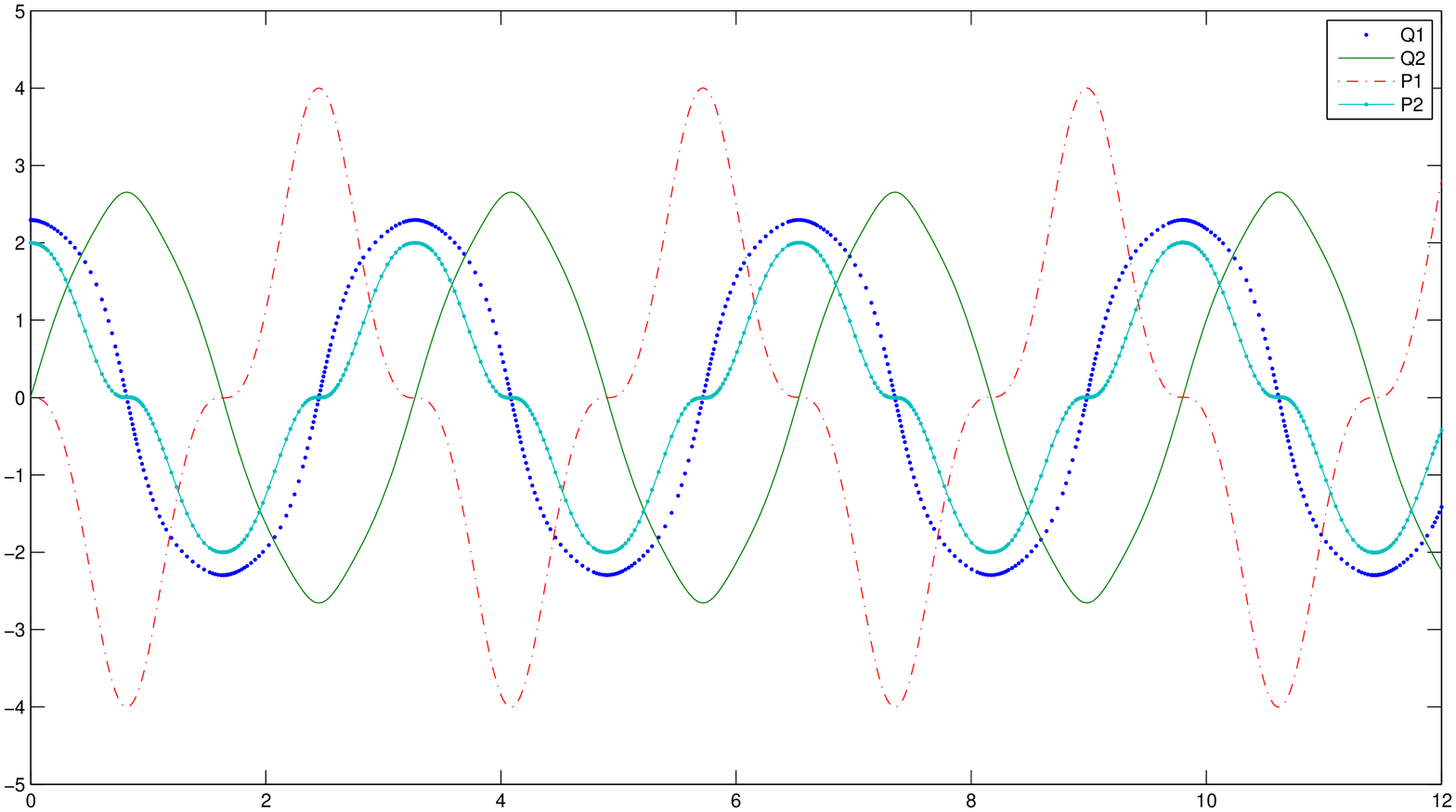}{1.05} \vspace{-0.5in}
\mycaption{The periodic solution in the coordinate system $Q_1, Q_2, P_1, P_2$ when $m=1$.}
\end{center}

\subsection{Stability Reductions using Symmetry}

The Schubart-like periodic solution $\gamma(s)=(Q_1(s),Q_2(s),P_1(s),P_2(s))$ with period $T$ in the collinear problem has two time-reversing symmetries. For
\[ F=\left[ \begin{array}{ccc}
1 & 0  \\
0 & -1
\end{array}\right],\]
the matrix
\[ S= \left[
\begin{array}{ccc}
F & 0  \\
0 & -F
\end{array}
\right]\]
is orthogonal and symmetric: $S^{-1}= S^{T} =S$. It is also an involution, i.e., $S^2=I$. Since $S\gamma(-s+T)$ is a solution of (\ref{e1}) through (\ref{e4}), and since this solution shares the same initial conditions as $\gamma(s)$ at $s=0$ by $T$-periodicity of $\gamma$, uniqueness of solutions implies that the matrix $S$ satisfies
$$\gamma(-s+T)= S  \gamma(s) \text{\ for all } s.$$
Thus $S$ is a time-reversing symmetry of $\gamma(s)$. With $N=1$, conditions (2), (3), and (4) in Lemma \ref{b} are satisfied, and so by Corollary \ref{c}, the monodromy matrix for $\gamma$ satisfies
\begin{equation}\label{3}
X(T)= S X(T/2)^{-1} S^{T} X(T/2)= S X(T/2)^{-1} S X (T/2).
\end{equation}
Consequently, from the above equation and $S^2=I$,
$$\left[S X(T) \right]^2=\left[X(T/2)^{-1} S X(T/2)\right] \left[X(T/2)^{-1} S X(T/2)\right]= I .$$
Since $-S\gamma(-s+T/2)$ is a solution of (\ref{e1}) through (\ref{e4}), and as $-S\gamma(T/2)$ is the same as $\gamma(0)$, uniqueness of solutions implies that the matrix $-S$ satisfies
$$ \gamma(-s+ T/2)= -S \gamma(s) \text{\ for all } s.$$
Thus $-S$ is another time-reversing symmetry of $\gamma(s)$. For $N=2$, conditions (2), (3), and (4) of Lemma \ref{b} are satisfied, and so Corollary \ref{c} implies that
\begin{equation}\label{4}
X(T/2)= S X(T/4)^{-1} S X(T/4).
\end{equation}
For
\[ B=X (T/4),\]
combining equations (\ref{3}) and (\ref{4}) gives
$$X(T)= (SB^{-1} S B)^2$$
With $A=SB^{-1} SB$ and $D=B^{-1} SB$, then
\[ X(T)=A^2=(SD)^2,\]
where $S^2=I$ and $D^2=I$. The two time-reversing symmetries $S$ and $-S$ of $\gamma$ are both involutions, and together they generate a $D_2$ symmetry group for $\gamma$.

\subsection{A Good Basis}
We have reduced the stability analysis to the first quarter of the periodic oribt. Let $Y(s)$ be the fundamental matrix solution to the linearized equations about Schubart-like periodic orbit $\gamma(s)$ with arbitrary initial conditions $Y_0$. Let
\[ B= Y(T/4).\]
By the third remark following Corollary \ref{c}, the matrix $Y_0^{-1} Y(T)$, which is similar to the monodromy matrix $X(T)= Y(T) Y_0^{-1}$, satisfies 
$$Y_0^{-1} Y(T)=\left( \left(Y_0^{-1} S Y_0\right) B^{-1} SB   \right)^{2}.$$
The question of stability reduces to showing that the eigenvalues of
$$W=\left(Y_0^{-1} S Y_0\right) B^{-1} SB$$
are on the unit circle. An appropriate choice of $Y_0$ will simplify the factor $Y_0^{-1}SY_0$ in $W$. Set
\[ \Lambda = \begin{bmatrix} I & 0 \\ 0 & -I\end{bmatrix}.\]

\begin{lemma}\label{Choice1}
There exists $Y_0$ such that
\begin{enumerate}
\item $Y_0$ is orthogonal and symplectic, and
\item $Y_0^{-1} S Y_0= \Lambda$.
\end{enumerate}
\end{lemma}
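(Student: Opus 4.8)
The plan is to construct $Y_0$ explicitly as a block matrix built from the eigenspaces of the symmetry matrix $S$. Since $S$ is a real symmetric orthogonal involution, it is diagonalizable with eigenvalues $\pm 1$; here $S = \mathrm{diag}(1,-1,-1,1)$, so the $+1$ eigenspace is spanned by $e_1, e_4$ and the $-1$ eigenspace by $e_2, e_3$. To get $Y_0^{-1} S Y_0 = \Lambda = \mathrm{diag}(I,-I)$ (with the $I$ blocks of size $2$), I need the first two columns of $Y_0$ to lie in the $+1$ eigenspace of $S$ and the last two columns to lie in the $-1$ eigenspace. Requiring in addition that $Y_0$ be orthogonal is automatic once I pick orthonormal bases of the two eigenspaces, since eigenspaces of a symmetric matrix for distinct eigenvalues are orthogonal.

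The remaining—and genuinely restrictive—requirement is that $Y_0$ also be symplectic, i.e. $Y_0^T J Y_0 = J$. So the key step is to choose the orthonormal bases of the two eigenspaces compatibly with the symplectic form. Concretely, I would write $Y_0$ in the block form
\[
Y_0 = \begin{bmatrix} U & 0 \\ 0 & V \end{bmatrix}
\quad\text{or a suitable permutation thereof,}
\]
where the columns are reordered so that position-type eigenvectors of $S$ are paired with the momentum-type eigenvectors they are conjugate to under $J$. Because $S$ has the block form $\mathrm{diag}(F,-F)$ with $F = \mathrm{diag}(1,-1)$, the $+1$ eigenspace of $S$ is spanned by one position direction ($e_1$, from the $+1$ eigenvector of $F$) and one momentum direction ($e_4$, from the $-1$ eigenvector of $-F$), and similarly for the $-1$ eigenspace. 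Thus a conjugate pair under $J$ is split one-and-one between the two eigenspaces, which is exactly what makes a simultaneous orthogonal-and-symplectic choice possible: I can take $Y_0$ to be (a signed permutation close to) the matrix sending $e_1 \mapsto e_1$, $e_2 \mapsto e_4$ (the momentum conjugate to position $1$), $e_3 \mapsto e_2$, $e_4 \mapsto e_3$, and then verify directly that this $4\times 4$ matrix is orthogonal, symplectic, and conjugates $S$ to $\Lambda$.

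The main obstacle I expect is bookkeeping rather than mathematical depth: one must track carefully how the position/momentum splitting of the phase space interacts with the $\pm 1$ eigenspace splitting of $S$, so that the chosen basis vectors are simultaneously $S$-eigenvectors, orthonormal, and a Darboux (symplectic) basis. Since $S$ is only $4\times 4$ here, the cleanest route is simply to exhibit the permutation-type matrix $Y_0$ and check the three properties $Y_0^T Y_0 = I$, $Y_0^T J Y_0 = J$, and $Y_0^{-1} S Y_0 = \Lambda$ by direct computation. I would then note that $\Lambda^2 = I$, so $Y_0^{-1} S Y_0$ is the desired involution, which is what lets $W = \Lambda\, B^{-1} S B$ have the especially simple structure exploited in the subsequent stability reduction via Lemma~\ref{06}.
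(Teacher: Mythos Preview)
Your approach is essentially the paper's: exhibit an explicit signed permutation matrix whose first two columns span the $+1$--eigenspace of $S$ and whose last two span the $-1$--eigenspace, then verify orthogonality, symplecticity, and $Y_0^{-1}SY_0=\Lambda$ by direct computation. Two remarks. First, the particular permutation you wrote down (columns $e_1,e_4,e_2,e_3$) is orthogonal and diagonalizes $S$ but is \emph{not} symplectic; a sign must be inserted, and the paper's choice $Y_0=[Je_2,\,Je_3,\,e_2,\,e_3]$ (columns $-e_4,e_1,e_2,e_3$) is the clean fix, since columns of the form $[Ja_1,Ja_2,a_1,a_2]$ with $\{a_1,a_2,Ja_1,Ja_2\}$ orthonormal are automatically symplectic. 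Second, the paper's choice is not arbitrary among the possible $Y_0$'s: the third column is deliberately taken to be $\gamma'(0)/\Vert\gamma'(0)\Vert=e_2$, which is what forces $v=Y_0^{-1}\gamma'(0)=\Vert\gamma'(0)\Vert\,e_3$ and hence $We_3=-e_3$ in the subsequent argument, giving the known first column $[-1,0]^T$ of $K$. Your abstract eigenspace construction proves the lemma as stated, but you should make this dynamical normalization explicit if you want the later reduction to a single scalar $c_2^T(SJc_4)$ to go through.
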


\begin{proof}
Choose the third column of $Y_0$ to be $\gamma^{\,\prime}(0)/\Vert\gamma^{\,\prime}(0) \Vert = [0\ 1\ 0\ 0]^{T} =e_2$. For $e_3=[0\ 0\ 1\ 0]^T$, the matrix
\[ Y_0= \left[ Je_2, Je_3, e_2, e_3 \right] = \left[
\begin{array}{cccc}
0 & 1 & 0 & 0  \\
0 & 0 & 1 & 0  \\
0 & 0 & 0 & 1  \\
-1 & 0 & 0 & 0  
\end{array}
\right]\]
is orthogonal and symplectic. Since $S={\rm diag}\{1, -1, -1, 1\}$, it follows that $Y_0^{-1} S Y_0$ has the desired form. 
\end{proof}

Setting $D= B^{-1}SB$ and choosing $Y_0$ as constructed in Lemma \ref{Choice1} gives
\[ W=\left(Y_0^{-1} S Y_0\right) B^{-1} SB= \Lambda D.\]
The matrices $\Lambda$ and $D$ are both involutions, i.e., $\Lambda^2=I$, $D^2=I$. From these it follows that
\[ W^{-1}= D \Lambda.\]
Because $B$ is a symplectic matrix, a short computation using the formula for the inverse of a symplectic matrix shows that $D$ has the form
\[ \left[ \begin{array}{ccc} K^{T} & L_1  \\ -L_2 & -K \end{array}\right]\]
for $2\times 2$ matrices $K,L_1,L_2$. It follows that
$$W=\left[ \begin{array}{ccc} I & 0  \\ 0 & -I \end{array} \right] \left[ \begin{array}{ccc} K^{T} & L_1  \\ -L_2 & -K \end{array} \right] = \left[ \begin{array}{ccc} K^{T} & L_1  \\ L_2 & K \end{array} \right],$$
and
$$W^{-1}=  \left[ \begin{array}{ccc} K^{T} & L_1  \\ -L_2 & -K \end{array} \right] \left[ \begin{array}{ccc} I & 0  \\ 0 & -I \end{array} \right]=\left[ \begin{array}{ccc} K^{T} & -L_1  \\ -L_2 & K \end{array}
\right]. $$
Hence,
$$\frac{1}{2} \left( W + W^{-1}\right) =\left[ \begin{array}{ccc} K^{T} & 0  \\ 0 & K \end{array} \right]. $$

We show that the first column of $K$ is $[-1\ 0]^T$. Set $v= Y_0^{-1} \gamma^{\,\prime}(0)$. By the choice of $Y_0$,
\[ v = Y_0^T \gamma^{\,\prime}(0) = \Vert \gamma^{\,\prime}(0)\Vert e_3.\]
Since $S$ is symmetric and $Y_0$ is orthogonal, then by the third remark after Corollary \ref{c},
\[ W = Y_0^{-1}S Y_0 B^{-1}SB = Y_0^{-1} S Y_0 B^{-1}S^T B = Y_0^{T} Y(T/2).\]
Now $\gamma^{\,\prime}(s)$ is a solution of $\dot \xi = JD^2\Gamma(\gamma(s))\xi$ and $\gamma^{\,\prime}(0) = Y(0) Y_0^{-1}\gamma^{\,\prime}(0) = Y(0)v$, and so
$\gamma^{\, \prime}(s) = Y(s) Y_0^{-1} \gamma^{\,\prime}(0) = Y(s)v$. This implies that
\[ Y_0^{-1} \gamma^{\,\prime}(T/2) =  Y_0^T Y(T/2) v = W v.\]
Since $\gamma(s)$ satisfies $\gamma(-s+T/2) = -S\gamma(s)$ for all $s$, then $\gamma^{\,\prime}(-s+T/2) = S\gamma^{\,\prime}(s)$ for all $s$. Setting $s=0$ in this gives $ \gamma^{\, \prime}(T/2)=S\gamma^{\,\prime}(0)$. Since $\gamma^{\,\prime}(0)$ is a nonzero scalar multiple of $e_2$ and since $Se_2=-e_2$, then
\[ Y_0^{-1}\gamma^{\,\prime}(T/2) = Y_0^{-1} S\gamma^{\,\prime}(0) = - Y_0^{-1} \gamma^{\,\prime}(0) = -Y_0^T \gamma^{\,\prime}(0) = -v.\]
Thus $Wv=-v$, implying that $-1$ is an eigenvalue of $W$ and $e_3$ is an eigenvector of $W$ corresponding to this eigenvalue. Thus the first column of $K$ is as claimed. The form of the rest of $K$ comes from the formula for the inverse of a symplectic matrix and the definition of $D$:
\[ K= \left[ \begin{array}{ccc} -1 & *  \\ 0 & c_2^{T} \left( S J c_4 \right) \end{array} \right],\]
where $c_i$ is the ${i}^{\rm th}$ column of $Y(T/4)$.

\subsection{Numerical Calculations}
With an absolute error tolerance of $1\times10^{-12}$, our numerical results for $m=1$ showed that the initial condition 
$$ Q_1(0)= R(1)=2.295592258717, \ \ Q_2(0)=0, \ \  P_1(0)=0, \ \ P_2(0)= 2 $$
leads to a periodic simultaneous binary collision periodic orbit (as in Figure 1) whose period $T$ satisfies $T/4= 0.817348080989685$. Using MATLAB and a Runge-Kutta-Fehlberg algorithm, we computed the columns of the matrix $Y(T/4)$ with an absolute error tolerance of $4\times 10^{-6}$. From this, we got
$$c_2^{T}( S J c_4 ) = 0.598490.$$
For values of $m$ between $0$ and $50$ at $0.01$ increments, we numerically computed the value of $R(m)$ in the initial conditions and the value of the period $T$ (with an absolute error tolerance of $4\times 10^{-6}$), and the values of $c_2^T (SJ c_4)$ (with an absolute error tolerance of $1\times 10^{-6}$). The results of these computations are contained in Figure 2.

A closer look at the numerical data in Figure 2 for where the value of $c_2^T(SJ c_4)$ is close to $1$ gives estimates of the two values of $m$ where the stability of the periodic orbit changes. The first critical value of $m$ is approximately $m=2.83$, and the second critical value of $m$ is approximately $m=35.4$.

The eigenvalues of $K$ are $-1$ and $c_2^T(SJ c_4)$. The eigenvalues of $K$ are distinct for most values of $m$ in $[0,50]$ because of the rigorous numerical estimates we have for $c_2^T(SJ c_4)$. Lemma \ref{06} now implies the following linear stability result.

\begin{theorem}\label{first} There exists small positive constants $\epsilon_i$, $i=1,2,3,4$ such that the periodic simultaneous binary collision orbit in the collinear symmetric four body problem with masses $1$, $m$, $m$, $1$ is linearly stable when $m<2.83-\epsilon_1$ and $35.4+\epsilon_2<m\leq50$, and is linearly unstable when $2.83+\epsilon_3<m<35.4-\epsilon_4$.
\end{theorem}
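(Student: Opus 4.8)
The plan is to turn the equivalence already assembled in this section into a statement about a single scalar function of $m$, and then read off the answer from the rigorous numerics. By the reductions above, for the good basis $Y_0$ of Lemma \ref{Choice1} the matrix $Y_0^{-1}Y(T)$, which is similar to the monodromy matrix $X(T)$, equals $W^2$ with $W=\Lambda D$, $D=B^{-1}SB$, $B=Y(T/4)$; moreover
\[ \tfrac12\bigl(W+W^{-1}\bigr)=\begin{bmatrix} K^T & 0 \\ 0 & K\end{bmatrix},\qquad K=\begin{bmatrix} -1 & * \\ 0 & c_2^T(SJc_4)\end{bmatrix}. \]
Since $W^2$ has all its eigenvalues on the unit circle exactly when $W$ does, Lemma \ref{06} shows that $\gamma$ is linearly stable iff both eigenvalues of $K$ are real and have modulus $\le 1$. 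As $K$ is triangular, its eigenvalues are precisely $-1$ and the real number $f(m):=c_2^T(SJc_4)$; the first always satisfies $|-1|\le 1$, so linear stability of $\gamma$ is equivalent to $-1\le f(m)\le 1$, and linear instability to $|f(m)|>1$.

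Next I would record that $f$ is a continuous function of $m$ on $[0,50]$. Indeed $R(m)$ and the quarter period $T(m)$ (determined by the collision initial conditions and the return to a simultaneous binary collision) depend continuously on $m$, and the vector fields (\ref{e1})--(\ref{e4}) together with their variational equations are smooth in $(Q_1,Q_2,P_1,P_2)$ and in $m$ on the region the regularized orbit traverses; hence the columns $c_2,c_4$ of $Y(T/4)$, and therefore $f(m)$, vary continuously with $m$.

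Then I would appeal to the numerical computations summarized in Figure 2, carried out with the stated absolute tolerances ($10^{-6}$ for $f$, $4\times10^{-6}$ for $R$ and $T$): $f(1)\approx 0.598<1$; $f(m)$ stays strictly below $1$ (and above $-1$) for $m$ up to roughly $2.83$, crosses the value $1$ near $m\approx 2.83$, remains strictly above $1$ on the interior of $[2.83,35.4]$, crosses $1$ again near $m\approx 35.4$, and is again strictly between $-1$ and $1$ on $(35.4,50]$. Choosing $\epsilon_1,\epsilon_2,\epsilon_3,\epsilon_4>0$ small enough that, on the compact sets $[0,2.83-\epsilon_1]$ and $[35.4+\epsilon_2,50]$ one has $-1<f(m)<1$ with a margin exceeding the numerical error, and on $[2.83+\epsilon_3,35.4-\epsilon_4]$ one has $f(m)>1$ with a margin exceeding the numerical error, the equivalence of the first paragraph gives linear stability on the two outer intervals and linear instability on the middle one.

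The main obstacle is making this last step genuinely rigorous: one must control the accumulated error of the numerical integration of both the orbit $\gamma$ and the $4\times4$ fundamental matrix $Y$ over a full quarter period --- an arc that approaches the regularized simultaneous binary collision at $s=T/4$, where the vector field, though smooth, is steep --- and simultaneously bound the errors in the auxiliary quantities $R(m)$ and $T(m)$, uniformly in $m$ over a mesh fine enough that the continuity of $f$ precludes any excursion of $f$ across $\pm1$ between mesh points except near the two true crossings. The purely algebraic part --- that everything hinges on the one number $f(m)$ and its comparison with $1$ --- is already in place, so the proof ultimately reduces to certifying $f(m)$ to within a tolerance smaller than its distance to $1$ away from the two critical masses.
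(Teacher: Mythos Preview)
Your proposal is correct and follows essentially the same route as the paper: reduce via Lemma \ref{06} to the two eigenvalues $-1$ and $f(m)=c_2^T(SJc_4)$ of the triangular matrix $K$, then invoke the numerical computation of $f(m)$ over the mesh $m\in[0,50]$ (Figure 2) to locate where $\lvert f(m)\rvert\le 1$. Your explicit discussion of the continuity of $f$ in $m$ and of the error-certification issue is more detailed than what the paper records, but it is an elaboration of the same argument rather than a different one.
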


This result confirms the linear stability analysis of Sweatman \cite{SW2} for $m$ between $0$ and $50$, asserting that the periodic orbit is unstable when $m$ is between $2.83$ and $35.4$. Simulations of the periodic orbit when $m$ is between $2.83$ and $35.4$ indicate that the linear instability is manifested slowly over time.

\begin{center}
\myfig{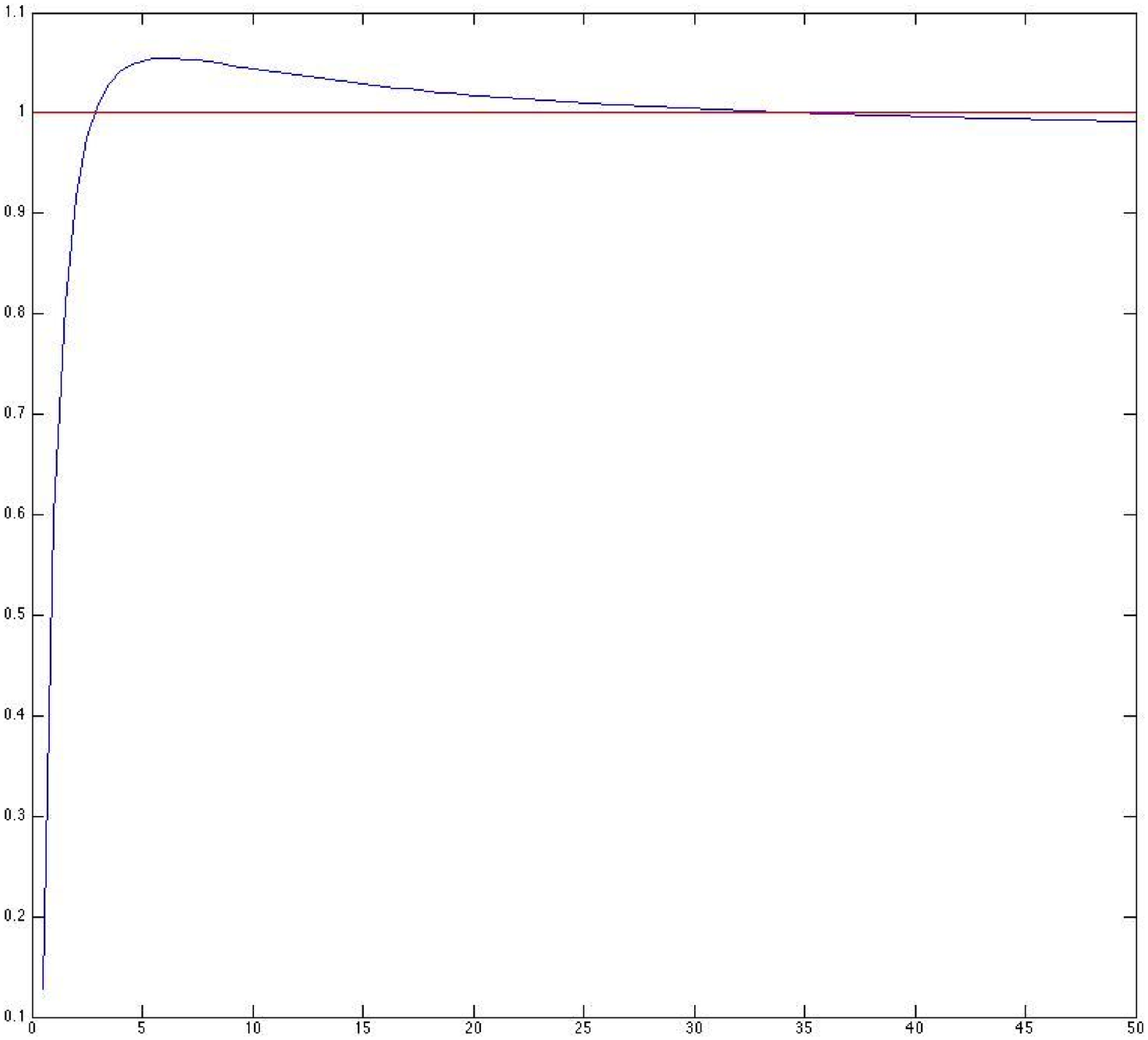}{1.0} \vspace{-0.6in}
\mycaption{The value of $c_2^T(SJ c_4)$ for values of $m$ between $0$ and $50$.}
\end{center}

\section{Linear Stability for the 2D Symmetric Periodic Orbit}

In \cite{OY3}, we proved the existence of a special type of planar periodic solution of $2n$ bodies with equal masses. In this section, we are going to consider the linear stability of this periodic solution when $n=2$. If $(x_1,x_2)$ is the position of the first body, then the positions of the remaining three bodies are $(x_2,x_1)$, $(-x_1,-x_2)$, and $(-x_2,-x_1)$. When each body has mass $m=1$, the Newtonian equations for this planar four-body problem are
\begin{equation*}
(\ddot x_1,\ddot x_2) = -\left[
\frac{(x_1-x_2,x_2-x_1)}{2^{3/2}\vert x_1-x_2\vert^3}
+ \frac{(x_1,x_2)}{4(x_1^2 + x_2^2)^{3/2}}
+ \frac{(x_1+x_2,x_1+x_2)}{2^{3/2}\vert x_1+x_2\vert^3}
\right].
\end{equation*}
The initial conditions for the periodic orbit, and the periodic orbit are illustrated in Figure 3.

\begin{center}
\myfig{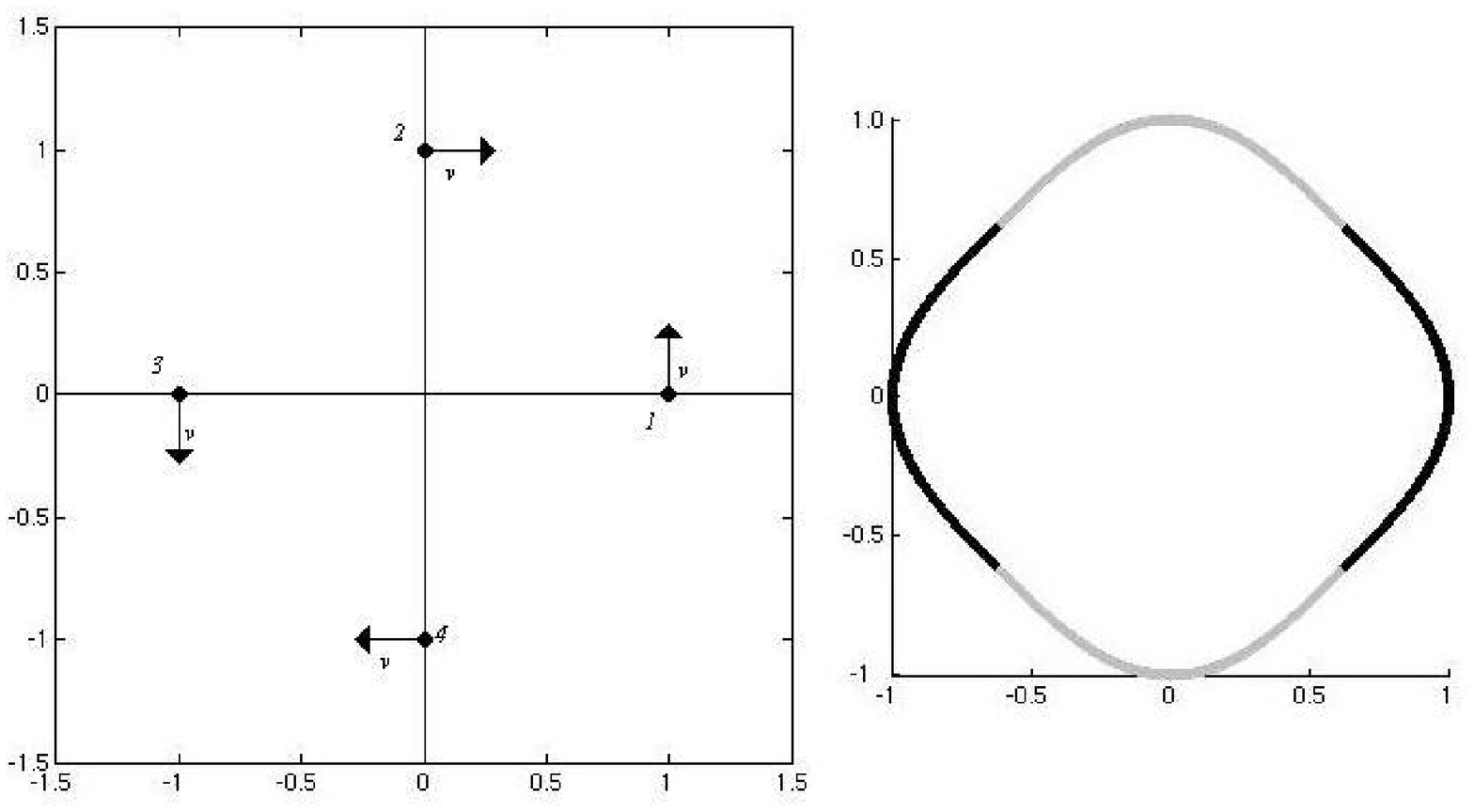}{0.8} \vspace{-0.3in}
\mycaption{On the left are the initial
conditions leading to the four-body two-dimensional periodic SBC
oribt.  On the right is the orbit.}
\end{center}

We adapt Sweatman's approach (\cite{SW}, \cite{SW2}) to regularize this system. The Hamiltonian for this system is
\[ H = \frac{1}{8} \big(w_1^2 + w_2^2\big) - \frac{\sqrt 2}{\vert x_1-x_2\vert} - \frac{\sqrt 2}{\vert x_1+x_2\vert} - \frac{1}{\sqrt{x_1^2+x_2^2}},\]
where $w_1=4\dot x_1$ and $w_2=4\dot x_2$ are the conjugate momentum. In terms of the canonical coordinates $(q_1,q_2,p_1,p_2)$ defined by
\[ q_1 = x_1-x_2,\ \ \ q_2=x_1+x_2,\ \ \ w_1= p_1+p_2,\ \ \  w_2=p_2-p_1,\]
the Hamiltonian becomes
\[ H = \frac{1}{4}\big(p_1^2+p_2^2\big) - \frac{\sqrt 2}{\vert q_1\vert} - \frac{\sqrt 2}{\vert q_2\vert} - \frac{\sqrt 2}{\sqrt{q_1^2+q_2^2}}.\]
The Levi-Civita type of canonical transformation used to regularize the collinear problem now applies to the four body equal mass 2D problem. In terms of the canonical coordinates $(Q_1,Q_2,P_1,P_2)$ defined by
\[ q_i= Q_i^2, \ \ \ P_i = 2Q_ip_i \ \ \ (i=1,2),\]
and the new time variable $s$ defined by
\[ \frac{dt}{ds} = Q_1^2Q_2^2,\]
the Hamiltonian in extended phase space becomes
\begin{equation}\label{00}
\Gamma= \frac{dt}{ds}(H-E)=
\frac{1}{16} (P_1^2Q_2^2 + P_2^2 Q_1^2)- \sqrt{2}(Q_1^2+Q_2^2)
-\frac{\sqrt{2}Q_1^2 Q_2^2 }{\sqrt{Q_1^4+ Q_2^4} } -EQ_1^2 Q_2^2
\end{equation}
where $E$ is the total energy of the Hamiltonian $H$. The differential equations in terms of the new coordinates $\{Q_1, Q_2, P_1, P_2\}$ are
\begin{equation}\label{01}
Q_1' = \frac{1}{8}P_1 Q_2^2
\end{equation}
\begin{equation}\label{02}
Q_2' = \frac{1}{8}P_2 Q_1^2
\end{equation}
\begin{equation}\label{03}
P_1' = -\frac{1}{8}P_2^2 Q_1 + 2\sqrt{2}Q_1+\frac{2\sqrt{2}Q_1 Q_2^2}{\sqrt{Q_1^4+Q_2^4}}-\frac{2\sqrt{2}Q_1^5 Q_2^2}{(Q_1^4+Q_2^4)^\frac{3}{2}} + 2EQ_1Q_2^2
\end{equation}
\begin{equation}\label{04}
P_2' = -\frac{1}{8}P_1^2 Q_2 + 2\sqrt{2}Q_2+\frac{2\sqrt{2}Q_2 Q_1^2}{\sqrt{Q_1^4+Q_2^4}}-\frac{2\sqrt{2}Q_2^5 Q_1^2}{(Q_1^4+Q_2^4)^\frac{3}{2}} + 2EQ_2Q_1^2.
\end{equation}
Unlike the collinear problem, we do not fix the value of $E$ here. As shown in \cite{OY2}, for each $\zeta>0$ there exists $v_0>0$ such that the initial conditions
\begin{equation}\label{05}
Q_1(0)=\zeta, \ \ Q_2(0)=\zeta,  \ \ P_1(0)=-4v_0,  \ \  P_2(0)=4v_0,
\end{equation} 
lead to a periodic solution with a minimal period $T$. From $\Gamma=0$, the value of $E$ is determined by this choice of $\zeta$ and $v_0$. By its construction in \cite{OY3}, this periodic orbit satisfies
\[ Q_1(T/4) = -\zeta, \ \ Q_2(T/4) = \zeta, \ \ P_1(T/4) = -4v_0, \ \ P_2(T/4) = -4v_0.\]
Simultaneous binary collisions correspond to $s=T/8,5T/8$ i.e., when $Q_1(s) = 0$, and to $s=3T/8,7T/8$, i.e., when $Q_2(s)=0$. For $\zeta=1$, $4v_0=2.57486992651942$, and $T/8=1.62047369909693$. Figure 4 illustrates the coordinates $(Q_1,Q_2,P_1,P_2)$ of this periodic solution.

\begin{center}
\myfig{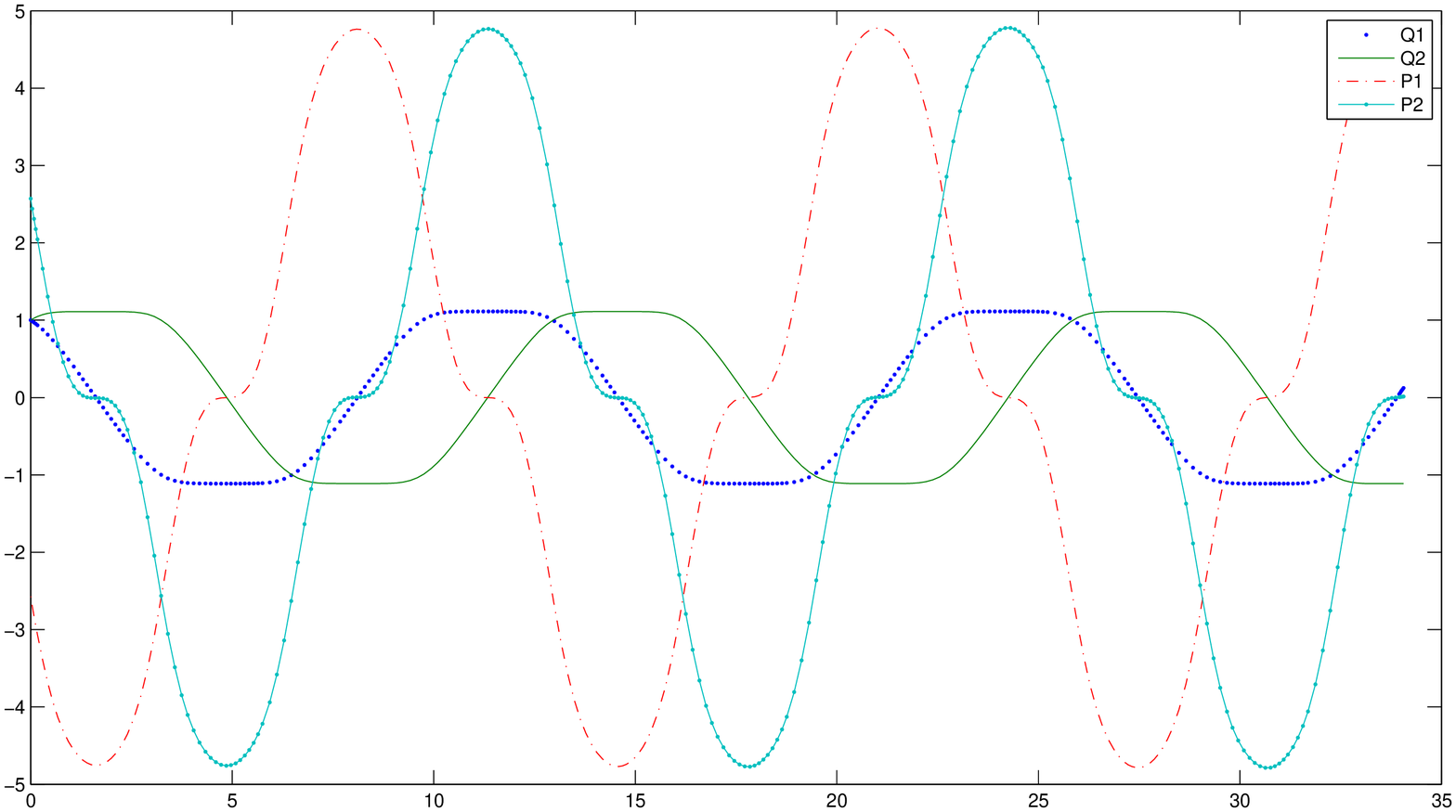}{1} \vspace{-0.5in}
\mycaption{The periodic solution in the coordinate system $Q_1, Q_2, P_1, P_2$.}
\end{center}

\subsection{Stability Reductions using Symmetry}

We will reduce the stability analysis to the first eighth of the periodic orbit. The symmetric periodic 2D orbit
\[ \gamma(t)= (Q_1(t), Q_2(t), P_1(t), P_2(t))\]
with period $T$ has a time-reversing symmetry and a time-preserving symmetry. For
\[ F = \begin{bmatrix} 0 & -1 \\ 1 & 0\end{bmatrix}, \ \ G = \begin{bmatrix} -1 & 0 \\ 0 & 1\end{bmatrix},\]
the matrices
\[ S_F = \begin{bmatrix} F & 0 \\ 0 & F\end{bmatrix}, \ \ S_G = \begin{bmatrix} G & 0 \\ 0 & -G\end{bmatrix}\]
satisfy $S_F^{-1}=S_F^T$, $S_F^2\ne I$, $S_F^3\ne I$, $S_F^4=I$, $S_G^2=I$, $S_G^T= S_G$, and $(S_FS_G)^2=I$. Since $\gamma(s+T/4)$ and $S_F\gamma(s)=(-Q_2(s),Q_1(s),-P_2(s),P_1(s))$ are solutions of (\ref{01}) through (\ref{04}) and share the same initial conditions when $s=0$, uniqueness of solutions implies that
\[ \gamma(s+T/4) = S_F\gamma(s) {\rm\ for\ all\ }s.\]
Thus $S_F$ is a time-preserving symmetry of $\gamma(s)$. With $N=4$, conditions (2), (3), and (4) of Lemma \ref{a} are satisfied, so that Corollary \ref{aa} (with $k=4$) and $S_F^4=I$ imply that
$$X(T)= S_{F}^4\left(S_{F}^{T} X(T/4)\right)^4= \left(S_{F}^{T} X(T/4)\right)^4.$$
Since $\gamma(-s+T/4)$ and $S_G \gamma(s)$ are solutions of (\ref{01}) through (\ref{04}) and share the same initial conditions when $s=0$, uniqueness of solutions implies that
\[ \gamma(-s+T/4) = S_G\gamma(s) {\rm\ for\ all\ } s.\]
Thus $S_G$ is a time-reversing symmetry for $\gamma(s)$. With $N=4$, conditions (2), (3), and (4) of Lemma \ref{b} are satisfied, and so Corollary \ref{c} implies that
$$ X(T/4)= S_G \left[X(T/8)\right]^{-1} S_G^T X(T/8)= S_G \left[X(T/8)\right]^{-1} S_G  X(T/8).$$
Let
\[ B=X(T/8).\]
Combining the factorization of $X(T)$ that involves $S_F$ and the factorization of $X(T/4)$ that involves $S_G$ gives the factorization
$$X(T)= \left(S_F^T S_G B^{-1} S_G B \right)^4.$$
Setting
\[ Q= S_{F}^{T} S_{G}= \left[
\begin{array}{cccc}
0 &  1 & 0 & 0 \\
1 & 0 & 0 & 0 \\
0 & 0 & 0 & -1 \\
0 & 0 & -1 & 0
\end{array}
\right] \]
and $D= B^{-1} S_{G} B$ results in the factorization
\[ X(T)= (QD)^4\]
where $Q$ and $D$ are both involutions. The symmetries $S_F$ and $S_G$ generate a $D_4$ symmetry group for the periodic orbit $\gamma(s)$.

\subsection{A Good Basis}

Let $Y(s)$ be the fundamental matrix solution to the linearized equations about the 2D periodic orbit $\gamma(s)$ with arbitrary initial conditions $Y_0$. Let
\[ B = Y(T/8).\]
By remarks following Corollaries \ref{aa} and \ref{c}, the matrix $Y_0^{-1}Y(T)$, which is similar to the monodromy matrix $X(T)=Y(T) Y_0^{-1}$, satisfies
\[ Y^{-1}_0 Y(T)= (Y^{-1}_0 S^{T}_{F} S_{G} Y_0 B^{-1} S_G B)^4 = (Y_0^{-1} Q Y_0 B^{-1}S_GB)^4.\]
The question of linear stability reduces to showing that the eigenvalues of
\[ W= Y^{-1}_0 Q Y_0 B^{-1} S_G B\]
are on the unit circle. Recall that
\[ \Lambda = \begin{bmatrix} I & 0 \\ 0 & -I\end{bmatrix}.\]

\begin{lemma}\label{choice2} There exists $Y_0$ such that
\begin{enumerate}
\item $Y_0$ is orthogonal and symplectic, and
\item $Y_0^{-1} Q Y_0= \Lambda$.
\end{enumerate}
\end{lemma}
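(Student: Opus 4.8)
\emph{Proof proposal.} The plan is to follow the template of Lemma~\ref{Choice1}, exploiting that $Q=S_F^TS_G$ is itself a time-reversing symmetry matrix: it is orthogonal and symmetric, it is an involution ($Q^2=I$, as one checks directly from the displayed form of $Q$), and it anticommutes with $J$. The anticommutation holds because $S_F$ is time-preserving ($S_FJ=JS_F$, hence $S_F^TJ=JS_F^T$) while $S_G$ is time-reversing ($S_GJ=-JS_G$), so $QJ=S_F^TS_GJ=-S_F^TJS_G=-JS_F^TS_G=-JQ$. Consequently $Q$ has eigenvalues $\pm1$, each with a two-dimensional eigenspace, and these two eigenspaces are mutually orthogonal since $Q$ is symmetric.

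Two structural facts will drive the construction. First, multiplication by $J$ carries the $(-1)$-eigenspace of $Q$ onto the $(+1)$-eigenspace: if $Qv=-v$ then $Q(Jv)=-JQv=Jv$. Second, each eigenspace is isotropic for the form $\omega(u,v)=u^TJv$; indeed, if $Qu=-u$ and $Qv=-v$, then using $Q^T=Q$, $Q^2=I$, and $QJ=-JQ$ one computes $u^TJv=(Qu)^TJ(Qv)=u^TQJQv=-u^TJQ^2v=-u^TJv$, forcing $u^TJv=0$.

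With these in hand I would pick any orthonormal basis $\{u_1,u_2\}$ of the $(-1)$-eigenspace of $Q$ — for instance $u_1=\tfrac{1}{\sqrt2}(1,-1,0,0)^T$ and $u_2=\tfrac{1}{\sqrt2}(0,0,1,1)^T$, which yields an explicit $Y_0$ with entries $0,\pm\tfrac{1}{\sqrt2}$ — and set $Y_0=[\,Ju_1\mid Ju_2\mid u_1\mid u_2\,]$. Writing $U=[\,u_1\mid u_2\,]$, so that $Y_0=[\,JU\mid U\,]$ with $U^TU=I$ and $U^TJU=0$, the columns of $Y_0$ are orthonormal: the cross terms $(Ju_i)^Tu_j=-u_i^TJu_j$ vanish by the isotropy just established, so $Y_0$ is orthogonal. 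A block computation of $Y_0^TJY_0$, using $J^TJ=I$, $J^2=-I$, and $U^TJU=0$, returns exactly $J$, so $Y_0$ is symplectic. Finally, $QJU=-JQU=JU$ and $QU=-U$ give $QY_0=[\,JU\mid -U\,]=Y_0\Lambda$, i.e.\ $Y_0^{-1}QY_0=\Lambda$.

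The only step that needs care is the symplecticity check: an arbitrary orthonormal eigenbasis of $Q$ assembled into a matrix need not be symplectic, so the $[\,JU\mid U\,]$ layout together with the isotropy of the $(-1)$-eigenspace is exactly what makes the argument work. I do not anticipate any genuine obstacle beyond recording these observations; the remaining verifications are the routine $2\times2$ block identities already used in the proof of Lemma~\ref{Choice1}.
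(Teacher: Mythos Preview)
Your proof is correct and structurally identical to the paper's: both build $Y_0=[\,JU\mid U\,]$ with $U$ an orthonormal basis of the $(-1)$-eigenspace of $Q$, using $Q^T=Q$, $Q^2=I$, $QJ=-JQ$, and the resulting isotropy of that eigenspace to verify orthogonality, symplecticity, and $Y_0^{-1}QY_0=\Lambda$. The only difference is the particular basis chosen. You take the orbit-independent vectors $u_1=\tfrac{1}{\sqrt2}(1,-1,0,0)^T$, $u_2=\tfrac{1}{\sqrt2}(0,0,1,1)^T$; the paper instead sets $\mathrm{col}_3(Y_0)=\gamma'(0)/\Vert\gamma'(0)\Vert$ (which does lie in the $(-1)$-eigenspace, as one checks from the initial conditions~(\ref{05})) and then completes with an orthogonal vector in that eigenspace. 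For the lemma as stated your choice is cleaner, but the paper's choice is purposeful: the argument immediately following, that the first column of $K$ is $[1\ 0]^T$, relies on $Y_0^{-1}\gamma'(0)=\Vert\gamma'(0)\Vert e_3$ to conclude $We_3=e_3$, and this hinges on $\mathrm{col}_3(Y_0)$ being parallel to $\gamma'(0)$. With your $Y_0$ that subsequent step would not go through as written, so if you continue to the analysis of $K$ you will want to swap in the paper's basis.
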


\begin{proof} Choose the third column of $Y_0$ to be 
$$\frac{\gamma^{\,\prime}(0)}{\Vert\gamma^{\,\prime}(0) \Vert }=   \frac{1}{c} \left[
\begin{array}{cccc}
-a & a & b & b 
\end{array}
\right]^{T}   $$
where $a=v_0\zeta^2/2$, $b= E\zeta^3=(2 v_0^2 - 2 \sqrt{2} -1)\zeta$ and $c= \sqrt{2a^2+2b^2}$.  Let $\text{col}_{i}(Y_0)$ denote the $i^{\rm th}$ column of $Y_0$. Define

$$\text{col}_{1} (Y_0)= J \cdot \text{col}_{3} (Y_0)= \frac{1}{c} \left[
\begin{array}{cccc}
b & b & a & -a
\end{array}
\right]^{T} .$$ 

We now choose $\text{col}_{4} (Y_0)$ such that $\text{col}_{4} (Y_0)$ is orthogonal to $\text{col}_{3} (Y_0)$, and $\text{col}_{4} (Y_0)$ is  one of the eigenvectors of Q with respect to its eigenvalue of $-1$. Since the eigenspace of $Q$ corresponding to its eigenvalue of $-1$ is
\[ \text{span} \big\{ \left[ \begin{array}{cccc}
1 & -1 & 0 & 0
\end{array}
\right]^{T} ,\left[\begin{array}{cccc}
0 & 0 & 1 & 1
\end{array}
\right]^{T} \big\},\]
define
 $$\text{col}_{4} (Y_0)= \frac{1}{c} \left[
\begin{array}{cccc}
b & -b & a & a
\end{array}
\right]^{T}$$ 
and
$$\text{col}_{2} (Y_0)= J\cdot \text{col}_{4} (Y_0)= \frac{1}{c} \left[
\begin{array}{cccc}
a & a & -b & b
\end{array}
\right]^{T}. $$
The matrix
$$ Y_0= \frac{1}{c} \left[
\begin{array}{cccc}
b & a & -a & b \\
b & a & a & -b \\
a & -b & b & a \\
-a & b & b & a
\end{array}
\right],$$
 is both symplectic and orthogonal and it satisfies $Y^{-1}_0 Q Y_0=  \Lambda$.
\end{proof}

Setting $D=B^{-1}S_G B$ and choosing $Y_0$ to be the matrix constructed in Lemma \ref{choice2} gives $W= \Lambda D$. The matrices $\Lambda$ and $D$ are involutions (the latter because $S_G^2=I$). As in Section 3.2, $W^{-1} = D\Lambda$, and there is a $2\times 2$ matrix $K$ such that
\[ \frac{1}{2}\big( W+W^{-1}\big) = \begin{bmatrix} K^T & 0 \\ 0 & K\end{bmatrix}.\]

We show that the first column of $K$ is $[1\ 0]^T$. Since $S_G^T= S_G$, $Y_0^{-1}=Y_0^T$, it follows by the third remark following Corollary \ref{c} that
\[ W = Y^{-1}_0 S^{T}_{F} S_{G} Y_0 B^{-1} S_G B=Y^{-1}_0 S^{T}_{F} Y(T/4) = Y_0^{T} S^{T}_{F} Y(T/4).\]
Set $v= Y_0^{-1} \gamma^{\,\prime}(0)$. By the choice of the matrix $Y_0$,
\[v=Y_0^{-1} \gamma^{\,\prime}(0)= Y_0^{T} \gamma^{\,\prime}(0)= \left[
\begin{array}{cccc}
0   \\
0 \\
|| \gamma^{\,\prime}(0) || \\
0
\end{array}
\right] =|| \gamma^{\,\prime}(0) || e_3.\]
Because $\gamma^{\,\prime}(s)$ is a solution to the linearized equation $ \dot{\xi}= JD^2 \Gamma(\gamma(s)) \xi $ and because $\gamma^{\,\prime}(0)= Y(0) Y_0^{-1} \gamma^{\,\prime}(0)$, then $\gamma^{\,\prime}(s)= Y(s) Y_0^{-1} \gamma^{\,\prime}(0)$ for all $s$. Hence,
\begin{equation}\label{11}
Wv=Y^{T}_0 S^{T}_{F} Y(T/4) v=  Y^{T}_0 S^{T}_{F} \gamma^{\,\prime}(T/4).
\end{equation}
Since $\gamma$ satisfies $\gamma(s+T/4)= S_F \gamma(s)$ for all $s$ and $S_F^{-1}=S_F^T$, it then follows that
\[ \gamma^{\,\prime}(s)= S^{-1}_{F} \gamma^{\,\prime}(s+T/4) = S^{T}_{F} \gamma^{\,\prime}(s+T/4).\]
Setting $s=0$ in this gives $\gamma^{\,\prime}(0)=S^{T}_{F} \gamma^{\,\prime}(T/4)$, and consequently that
\begin{equation}\label{12}
Y^{T}_0 S^{T}_{F} \gamma^{\,\prime}(T/4)= Y^{T}_0 \gamma^{\,\prime}(0)= Y^{-1}_0 \gamma^{\,\prime}(0)= v.
\end{equation}
Equations (\ref{11}) and (\ref{12}) now combine to show that $Wv =v$, i.e, that $1$ is an eigenvalue of $W$ and $e_3$ is an eigenvector for $W$ corresponding to this eigenvalue. The  first column of $K$ is as claimed. The rest of $K$ comes from the formula for the inverse of a symplectic matrix and the definition of $D$:
$$K=\left[
\begin{array}{ccc}
1 & * \\
0 &  c_2^{T} (S_G J c_4) 
\end{array}
\right] ,$$ 
where $c_i$ is the $i^{\rm th}$ column of $B= Y(T/8)$.

\subsection{Numerical Calculations}

Having not fixed $E$, we used an invariant scaling of the coordinates and time in equations (\ref{01}) through (\ref{04}) to preselect a period $T$ before numerically computing the initial conditions for a periodic simultaneous binary collision orbit. For $\epsilon>0$, if $Q_1(s)$, $Q_2(s)$, $P_1(s)$, $P_2(s)$ is a periodic simultaneous collision orbit of equations (\ref{01}) through (\ref{04}), then replacing $E$ with $\epsilon^{-2} E$ shows that $\epsilon Q_1(\epsilon s)$, $\epsilon Q_2(\epsilon s)$, $P_1(\epsilon s)$, $P_2(\epsilon s)$ is also a periodic simultaneous binary collision orbit with energy $\epsilon^{-2}E$ and period  $\epsilon^{-1}T$. Furthermore, it is straight-forward to show that monodromy matrices for the periodic simultaneous binary collision orbits corresponding to values of $\epsilon\ne  1$ are all similar to that for $\epsilon=1$. Thus the linear stability of a periodic simultaneous binary collision orbit for one $\epsilon>0$ implies the linear stability of the periodic simultaneous binary collision orbits for all $\epsilon>0$.

We rigorously computed the value of $c_2^T(S_G J c_4)$ for the periodic simultaneous binary collision orbit whose period is $T=8$. This means that the first time of a simultaneous binary collision for this orbit is at  $s=1$. We set $Q_1(0)=Q_2(0)=\xi$ and $-P_1(0)=P_2(0)=\eta$, and defined a function $F(\xi,\eta)$ to be equal to the vector quantity $(Q_1(1),P_2(1))$. We used Newton's method and a good initial guess to find a root $(\xi,\eta)$ of $F$. This involved computing the Jacobian of $F$ which was done using the linearized equations. With an absolute error tolerance of $6\times 10^{-11}$, this  numerical method shows that the initial conditions
$$ Q_1(0)= Q_2(0)=1.62047369909693, \ \  -P_1(0)= P_2(0)= 2.57486992651942, $$
lead to a periodic solution with a period of $T=8$, and a value of $E\approx-1.142329388$. Using MATLAB and a Runge-Kutta-Fehlberg algorithm, we computed the columns of the matrix $Y(T/8)$ with an absolute error tolerance of $2.5\times 10^{-12}$. From this we got
$$ c_2^{T} \left( S_{G} J c_4 \right)= -0.68024151010592. $$
Using the scaling of coordinates and time described above, the initial conditions for the periodic simultaneous binary collision orbit shown in Figures 3 and 4 are
$$ Q_1(0)= Q_2(0)=1, \ \  -P_1(0)= P_2(0)=2.57486992651942 $$
with a period $T$ satisfying $T/8=1.62047369909693$, and energy $E\approx-2.999682732$.

For the periodic simultaneous binary collision orbit, the rigorous estimate of the eigenvalue $c_2^T (S_G J c_4)$ of $K$ and its distinctiveness from the eigenvalue $1$ of $K$ combine with Lemma \ref{06} to give the following stability result.

\begin{theorem} The periodic simultaneous binary collision orbit in the 2D-symmetric equal mass four-body problem is linearly stable.\end{theorem}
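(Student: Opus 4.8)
The plan is to carry the symmetry reduction of Sections 4.1--4.2 through to the end and then certify a single inequality. Recall that with $Y_0$ the orthogonal symplectic matrix of Lemma \ref{choice2}, $B = Y(T/8)$, and $D = B^{-1}S_G B$, the matrix $Y_0^{-1}Y(T) = W^4$ with $W = \Lambda D$ is similar to the monodromy matrix $X(T)$; hence $\gamma$ is linearly stable precisely when $W$ is stable. Since $\Lambda$ and $D$ are involutions, $\tfrac12(W + W^{-1})$ has the block-diagonal form $\left[\begin{smallmatrix} K^T & 0 \\ 0 & K\end{smallmatrix}\right]$ required by Lemma \ref{06}, so it suffices to show that both eigenvalues of the $2\times 2$ matrix $K$ are real with absolute value at most $1$.

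The first eigenvalue of $K$ is pinned down structurally. Writing $W = Y_0^T S_F^T Y(T/4)$ and using $\gamma^{\,\prime}(0) = S_F^T \gamma^{\,\prime}(T/4)$, which follows from $\gamma(s+T/4) = S_F\gamma(s)$ and $S_F^{-1}=S_F^T$, one gets $Wv = v$ for $v = Y_0^{-1}\gamma^{\,\prime}(0) = \Vert\gamma^{\,\prime}(0)\Vert\, e_3$, so $1$ is an eigenvalue of $W$; the formula for the inverse of a symplectic matrix then forces $K = \left[\begin{smallmatrix} 1 & * \\ 0 & c_2^T(S_G J c_4)\end{smallmatrix}\right]$, where $c_i$ is the $i$th column of $B$. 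Thus the only remaining quantity is the single real number $c_2^T(S_G J c_4)$: I must show $|c_2^T(S_G J c_4)| \le 1$, and in fact $<1$, which simultaneously makes the two eigenvalues of $K$ distinct so that $W$ is diagonalizable transverse to the flow direction.

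To evaluate that number I would first invoke the invariant scaling $Q_i(s)\mapsto \epsilon Q_i(\epsilon s)$, $P_i(s)\mapsto P_i(\epsilon s)$, $E\mapsto \epsilon^{-2}E$, under which the monodromy matrices of the resulting periodic SBC orbits are all mutually similar; hence it is enough to treat the normalized orbit of period $T=8$, whose first simultaneous binary collision occurs at $s=1$. Then: (a) locate the periodic initial data by applying Newton's method to $F(\xi,\eta) = (Q_1(1), P_2(1))$ with $Q_1(0)=Q_2(0)=\xi$, $-P_1(0)=P_2(0)=\eta$, the Jacobian of $F$ being read off from the variational equations (the vanishing of $Q_1$ and $P_2$ at $s = T/8$ is exactly the SBC configuration, and the $D_4$ symmetry generated by $S_F, S_G$ then closes the orbit); (b) integrate (\ref{01})--(\ref{04}) together with their linearization from $s=0$ to $s=T/8$ with a controlled-error Runge--Kutta--Fehlberg scheme to obtain the columns $c_1,\dots,c_4$ of $B = Y(T/8)$; (c) assemble $c_2^T(S_G J c_4)\approx -0.68024151010592$ and observe that, with the stated absolute error tolerance, the computed value together with its error bound lies strictly inside $(-1,1)$. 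Then $K$ has eigenvalues $1$ and $c_2^T(S_G J c_4)$, both real of modulus $\le 1$, so Lemma \ref{06} gives that $W$ is stable, $X(T)$ has all characteristic multipliers on the unit circle, and $\gamma$ is linearly stable.

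The main obstacle is making step (c) genuinely rigorous: one needs honest error control both in the Newton solve (so that the returned $(\xi,\eta)$ provably yields a true periodic SBC orbit) and in the coupled nonlinear-plus-variational integration through the regularized but still delicate passage near the collision times, so that the final interval for $c_2^T(S_G J c_4)$ is certified to avoid $\pm 1$; a fully rigorous treatment would use validated interval ODE integration, whereas here one relies on a high-precision RKF run with a declared tolerance. A minor point worth flagging is that ``linearly stable'' here means spectral stability: the eigenvalue $1$ of $W$ has algebraic multiplicity two, but this is the standard trivial degeneracy (flow direction together with the energy direction), and transverse to it the distinctness of the eigenvalues of $K$ rules out any nontrivial Jordan block.
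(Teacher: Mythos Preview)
Your proposal is correct and follows essentially the same route as the paper: the same symmetry reduction to $W=\Lambda D$ via Lemma~\ref{choice2}, the same structural identification of the eigenvalue $1$ of $K$ through $Wv=v$, the same scaling to $T=8$ with Newton's method on $F(\xi,\eta)=(Q_1(1),P_2(1))$, and the same RKF computation of $c_2^T(S_GJc_4)\approx -0.68024151010592$ followed by Lemma~\ref{06}. Your closing remarks on validated integration and on the spectral-versus-Lyapunov meaning of ``linearly stable'' are fair caveats that the paper does not spell out, but they do not alter the argument.
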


When $c_2^T(S_G Jc_4)$ is real and between $-1$ and $1$, it is the real part of an eigenvalue with unit modulus for $W$ (see \cite{RO}). For the periodic simultaneous collision orbit, the real part of $\exp(3\pi i/ 4)$, that is $-(1/2)\sqrt 2$, is fairly close to the rigorously estimated value of $c_2^{T} \left( S_{G} J c_4 \right)$. Raising $\exp(3\pi i/ 4)$ to the fourth power gives $\exp(3\pi i)=-1$, and so two of the eigenvalues of the monodromy matrix of the periodic simultaneous binary collision orbit are close to $-1$. The symmetry reductions used to compute the eigenvalues over just one-eighth of the period and the rigorous estimate of $c_2^{T} \left( S_{G} J c_4 \right)$ showing that it is clearly between $-1$ and $1$, assures the linear stability of the periodic simultaneous binary collision orbit.

\vspace{0.1in}

\noindent{\bf Acknowledgments}. Gareth Roberts thanks the Department of Mathematics at Brigham Young University for hosting him. Lennard Bakker, Tiancheng Ouyang, Duokui Yan, and Skyler Simmons thank Gareth Roberts for his visit and collaboration.


\begin{thebibliography}{99}

\bibitem{AZ} Aarseth, S.J., and Zare, K., {\it A Regularization of the Three-Body Problem}, Celestial Mechanics {\bf 10} (1974) 185-205.

\bibitem{CM} Chenciner, A., and Montgomery, R., {\it A remarkable periodic solution of the three-body problem in the case of equal masses}, Ann. of Math. {\bf 152}  (2000), 881-901.

\bibitem{He} H\'enon, M., {\it Stability of interplay orbits}, Cel. Mech., {\bf 15} (1977), 243-261.

\bibitem{HM} Hietarinta, J., and Mikkola, S., {\it Chaos in the one-dimensional gravitational three-body problem}, Chaos {\bf 3} (2) 1993, 183-203.

\bibitem{Mo} Moeckel, R., {\it A Topological Existence Proof for the Schubart Orbits in the Collinear Three-Body Problem}, Dis. Con. Dyn. Syst. Series B, Vol. 10, No. 2 \& 3 (2008), 609-620.

\bibitem{MR} Moore, C., {\it  Braids in classical dynamics}, Phys. Rev. Lett. {\bf 70} (24) (1993), 3675-3679.

\bibitem{OY2} Ouyang, T. and Yan D., {\it Periodic Solutions with Alternating Singularities in the Collinear Four-body Problem}, preprint posted on arxiv, 2008.

\bibitem{OY3} Ouyang, T., Simmons, S.C., and Yan, D., {\it Periodic Solutions with Singularities in Two Dimensions in the $n$-body Problem}, preprint posted on arxiv, 2008.

\bibitem{RO} Roberts, G., {\it Linear Stability analysis of the figure-eight orbit in the three-body problem}, Ergod. Th. \& Dynam. Sys. {\bf 27}, (2007) 1947-1963.

\bibitem{Sc} Schubart, J., {\it Numerische Aufsuchung periodischer L\"osungen im Dreik\"orperproblem}, Astronomische Nachriften, {\bf 283} (1956), 17-22.

\bibitem{Si} Sim{\'o}, C., {\it New families of solutions in the $N$-body problem}, Progress in Mathematics Vol. 201, Birkh\"auser (2001) 101-115.

\bibitem{SW} Sweatman, W.L., {\it Symmetrical one-dimensional four-body problem}, Celest. Mech. Dynam. Astron. {\bf 82}, (2002) 179-201.

\bibitem{SW2} Sweatman, W.L., {\it A Family of Symmetrical Schubart-Like Interplay Orbits and their Stability in the One-Dimensional Four-Body Problem}, Celest. Mech. Dynam. Astron. {\bf 94(1)}, (2006), 37-65.

\bibitem{Ve} Venturelli, A., {\it A Variational Proof of the Existence of Von Schubart's Orbit}, Dis. Con. Dyn. Syst. Series B, Vol. 10, No. 2\&3 (2008), 699-717.


\end{thebibliography}
\end{document}